\newcommand{\crefdetail}[2]{\hyperref[#1]{\namecref{#1}~\labelcref*{#1}~\ref*{#2}}}
\DeclarePairedDelimiter\autobracket{(}{)}
\newcommand{\bb}[1]{\autobracket*{#1}}
\DeclarePairedDelimiter\autobracketcurly{\{}{\}}
\newcommand{\bc}[1]{\autobracketcurly*{#1}}
\DeclarePairedDelimiter\autobracketsquare{[}{]}
\newcommand{\bs}[1]{\autobracketsquare*{#1}}
\DeclarePairedDelimiter\abs{\lvert}{\rvert}
\DeclarePairedDelimiter\norm{\lVert}{\rVert}
\let\oldabs\abs
\def\abs{\@ifstar{\oldabs}{\oldabs*}}
\let\oldnorm\norm
\def\norm{\@ifstar{\oldnorm}{\oldnorm*}}
\let\oldtilde\tilde
\def\tilde{\@ifstar{\widetilde}{\oldtilde}}
\let\oldforall\forall
\renewcommand{\forall}{\oldforall \, }
\let\oldexist\exists
\renewcommand{\exists}{\oldexist \: }
\newcommand{\defi}{\coloneqq}
\newcommand{\N}{\mathbb{N}} 
\newcommand{\R}{\mathbb{R}} 
\newcommand{\inv}[1]{{#1}^{-1}}
\newcommand{\id}{\text{id}}
\newcommand{\restrscale}[2]{\left.#1\right|_{#2}}
\newcommand{\restrnoscale}[2]{{#1} \vert_{#2}}
\def\restr{\@ifstar{\restrnoscale}{\restrscale}}
\DeclarePairedDelimiter\floor{\lfloor}{\rfloor}
\newcommand{\1}{\mathds{1}}
 \newtheorem{thm}{Theorem}[section]
 \newtheorem{cor}[thm]{Corollary}
 \newtheorem{lem}[thm]{Lemma}
 \theoremstyle{definition}
 \newtheorem{defn}[thm]{Definition}
 \theoremstyle{remark}
 \newtheorem{ex}[thm]{Example}
 \numberwithin{equation}{section}
\newcommand{\n}[1]{{\left\|{#1}\right\|}}
\begin{document}

%
%
%
%
%
%
%
%
%

\title[Integral RB Operator]
 {An Integral RB Operator}

\author[Jahn]{Marvin Jahn}

\address{%
Department of Mathematics\\
Technical University of Munich\\
Boltzmannstr. 3\\
85748 Garching b. München\\
Germany}

\email{marvin.jahn@tum.de}

\author[Massopust]{Peter Massopust}
\address{Department of Mathematics\\
Technical University of Munich\\
Boltzmannstr. 3\\
85748 Garching b. München\\
Germany}

\email{massopust@ma.tum.de}
\subjclass{Primary 47H10; Secondary 28A80, 41A30, 54A50}

\keywords{Fractal interpolation, Read-Bajraktarevi\'c operator, fixed point, function space}


\begin{abstract}
We introduce the novel concept of integral Read-Bajraktarevi\'c (iRB) operator and discuss some of its properties. We show that this iRB operator generalizes the known Read-Bajraktarevi\'c (RB) operator and we derive conditions for the fixed point of the iRB operator to belong to certain function spaces.
\end{abstract}

\maketitle
\section{Introduction}
The idea of interpolating a given set of data points by well-behaved functions is a prominent and very important concept in many areas of mathematics and the natural sciences. Typical choices for such functions are, for instance, polynomials or splines, but one may also wish to interpolate the points using less smooth or even non-differentiable functions. This is the case when the data come from an underlying model that describes highly irregular and complex signals or images.

This desire to approximate a data set by a non-smooth function leads to the concept of \textit{fractal interpolation} originally introduced in \cite{barnsley1986} and in a slightly different setting in \cite{H}. Since then, numerous variants of this concept have been studied. The interested reader is referred to the following albeit incomplete list of references regarding fractal interpolation and some of its applications: \cite{BHVV,bhm,bedford,DLM,dubuc2,KCM,LDV,massopust1,m5,N,SB,SB1}. More pointers to the flexibility and versatility of fractal interpolation theory are found in the  literature part of these papers.

For some of these variations, the focus has shifted from trying to interpolate certain points to the general construction of fractal functions given some parameters. This is also the setup considered in \cite{massopust2022}, an approach which we seek to generalize in this paper. 

In \cite{massopust2022}, a normed vector space $E$, a Banach space $F$ and a subset $X \subset E$ are fixed and one is interested in finding a function $\psi \colon X \to F$ satisfying $n$ functional equations of the form
\begin{equation}
  \label{eq:1}
  \psi(l_i(x)) = q_i(x) + s_i(x) \psi(x) \qquad \forall x \in X, i \in\N_n,
\end{equation}
where $l_i$, $q_i$, $s_i$ are certain fixed functions. Here and throughout the paper, we use the notation $\N_n$ to denote the initial segment of length $n$ of the natural numbers $\N := \{1, 2, \ldots\}$.

One of the assumptions in \cite{massopust2022} is that the $n$ functions $l_i \colon X \to X$ are injective and that their images $X_i \defi l_i(X)$ partition $X$. (The case where some of the $X_i$ have points in common is treated in \cite{SB,SB1}.) 

The functional equations \eqref{eq:1} express a kind of self-referentiability that their solutions must exhibit. (Cf. for instance, \cite{barnsley1986,massopust1}.) Thus, these solutions are also called \textit{fractal (interpolation) functions} as the graph of $\psi$ is in general a fractal (set), i.e., a set with in general non-integral (box or Hausdorff) dimension.

A key realization is that finding a solution of \eqref{eq:1} is equivalent to determining a fixed point of the \textit{Read-Bajraktarevi\'c} (\textit{RB}) operator \cite{bedford,SB,SB1} $T:Z\to Z$,
\begin{align}
  \label{eq:2}
  T&(f)(x) := \nonumber\\
  & \sum_{i=1}^n (q_i \circ \inv{l_i})(x) \1_{X_i}(x) + \sum_{i=1}^n (s_i \circ \inv{l_i})(x) \cdot (f \circ \inv{l_i})(x) \1_{X_i}(x),
\end{align}
where $Z$ denotes a suitable subset of the vector space $V(X,F)$ of all functions $X \to F$ and $\1_S$ the indicator or characteristic function of a set $S$.

The key insight is to choose $Z$ in such a way that it can be endowed with a metric (usually even a norm) such that $Z$ becomes a complete metric space.
It suffices then to show that $T$ is a contraction on $Z$ in order to establish the existence and uniqueness of a fixed point
of $T$ by the Banach fixed point theorem.

The novel idea presented in this paper is to replace the sums appearing in the RB operator \eqref{eq:2} by integrals (with respect to Lebesgue measure). This leads us to the definition of an \textit{integral Read-Bajraktarevi\'c} (\textit{iRB}) operator,
a new concept that is formulated in \cref{sec:integral-RB-operator}. In the subsequent Section 3, we introduce a method to extend a discrete collection $\{\tilde{l}_i : i \in\N_n\}$ of maps $\tilde{l}_i \colon X \to Y$ to a function $l \colon [1,n] \times X \to Y$ which may be thought of as a family of functions $l_t \colon X \to Y$ for $t \in [1,n]$ by a suitable homotopy.
This construction serves two purposes. On the one hand, we use it to specify in \cref{sec:irb-operator-generalizes-rb} in what sense the iRB operator generalizes the RB operator \eqref{eq:2}, and on the other hand, it allows to present some concrete examples.

In \cref{sec:bounded-solutions}, we consider the case that $Z = B(X,F)$, the subspace of $V(X,F)$ consisting of bounded functions equipped with the supremum norm, and explore under what assumptions on the parameter functions $l$, $s$, and $q$, we can guarantee that the iRB operator $T$ is a contraction on $Z$ thus yielding a unique fixed point in $B(X,F)$ by the Banach fixed point theorem.

In the subsequent \cref{sec:irb-operator-generalizes-rb}, we exhibit the RB operator \eqref{eq:2} as a special case of the iRB operator. In the following section, we investigate under what conditions the bounded fixed point is continuous. Finally, in \cref{sec:solutions-in-lp}, we turn our attention to the space of $L^p$-functions, $Z = L^p$, endowed with the usual $p$-norm and investigate under what assumptions the fixed point of the iRB operator $T$ exists and is unique.
\section{An integral RB Operator}\label{sec:integral-RB-operator}
Throughout this paper, let $E$ denote a normed vector space, $F$ a Banach space, and $X \subset E$ a nonempty subset of $E$. Moreover, let $Z$ denote a fixed subset of the vector space $V(X,F)$ of all functions $X \to F$, to be specified later.
Our first goal is to derive an ``integral'' version of the RB operator \eqref{eq:2} described in the introduction.

To this end, we observe that the sums within the RB operator \eqref{eq:2}
are just integrals with respect to counting measure $\mu$.
Indeed, for a fixed $x \in X$, we may define $a_i \defi (q_i \circ \inv{l_i})(x) \1_{X_i}(x)$ in order to rewrite the first sum in \eqref{eq:2} as
\begin{align*}
\sum_{i=1}^n a_i \cdot \mu(\bc{i}) = \sum_{i=1}^n \int_{\N} \1_{\bc{i}}(k) a_k d \mu(k) =  \int_\N \1_{\N_n}(k) a_k d \mu(k)
= \int_{\N_n} a_k d \mu(k) 
\end{align*}
and similarly for the second sum. Thus, it seems natural to replace the sums by integrals with respect to Lebesgue measure:
\begin{align*}
T \colon Z &\to Z,\\ 
T(f)(x) &= \int_1^n (q_t \circ \inv{l_t})(x) \1_{X_t}(x) d t \\
& \qquad + \int_1^n (s_t \circ \inv{l_t})(x) \cdot (f \circ \inv{l_t})(x) \1_{X_t}(x) d t. 
\end{align*}
We call this new mapping an \textit{integral Read-Bajraktarevi\'c} ({iRB}) operator.
Note that just like the RB operator \eqref{eq:2}, the iRB operator consists of a linear part (the second term) and an affine part (the first term), i.e., it is also an affine operator.

Instead of $n$ functional equations \eqref{eq:1}, a fixed point $\psi \colon X \to F$ of $T$ must now satisfy the integral equation
\[ \psi(x) = \int_1^n (q_t \circ \inv{l_t})(x) \1_{X_t}(x) d t + \int_1^n (s_t \circ \inv{l_t})(x) \cdot (\psi \circ \inv{l_t})(x) \1_{X_t}(x) d t. \]
Therefore, we also call the fixed points of $T$ \textit{solutions}.

Of course, the step of replacing sums by integrals requires modifying our setup.
Instead of $n$ injective functions $l_i \colon X \to X$ for $i \in \N_n$, we consider
a family of measurable functions $l \colon [1,n] \times X \to X$ such that the $l_t \defi l(t,\cdot) \colon X \to X$ are injective for almost all $t \in [1,n]$.

Following the above notation, we write for the image of each of the $l_t$ again $X_t \defi l_t(X)$. However, we do not assume that the $X_t$ (for $t \in [1,n]$) are disjoint or cover $X$. Note that $T(f)(x)$ is zero whenever $x \notin \bigcup\limits_{t \in [1,n]} X_t$ which means that those $x$ are not of particular interest.

In a similar way, we proceed with the $q_i$ and $s_i$. Instead of $n$ such functions we now have two measurable functions $q \colon [1,n] \times X \to F$ and $s \colon [1,n] \times X \to \R$.

For $x \in X$, we call the sets
\[ 
T_x \defi \bc{t \in [1,n] : x \in X_t} = \bc{t \in [1,n] : \exists y \in X: l(t,y) = x} \subset [1,n] 
\]
\textit{$x$-hit times}.

Thus, we may rewrite the iRB operator as follows:
\begin{equation}
  \label{eq:4}
  T(f)(x) = \int_{T_x} (q_t \circ \inv{l_t})(x) d t + \int_{T_x} (s_t \circ \inv{l_t})(x) \cdot (f \circ \inv{l_t})(x) d t.
\end{equation}
Therefore, in order for the iRB operator to be well-defined, we have to require that $T_x$ is measurable for every $x \in X$. Otherwise, $l_t$ could be, e.g., the identity function on the Vitali set for every $t \in [1,n]$ so that $T_x$ is the Vitali set for every $x \in X$ and thus not measurable.

If $X$ is compact, then $T_x$ is closed and therefore measurable, as the following lemma shows.
\begin{lem}
  Suppose that $X \subset E$ is compact and that $l \colon [1,n] \times X \to X$ is continuous.
  Then the $x$-hit times $T_x$ are closed for every $x \in X$.
\end{lem}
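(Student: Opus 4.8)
The plan is to realise $T_x$ as the continuous image of a compact set. Fix $x \in X$ and consider the fibre
\[ S_x \defi \inv{l}(\bc{x}) = \bc{(t,y) \in [1,n] \times X : l(t,y) = x}. \]
Since $E$ is a normed space, $X$ is in particular a metric (hence Hausdorff) space, so the singleton $\bc{x}$ is closed in $X$; as $l$ is continuous, $S_x$ is a closed subset of $[1,n] \times X$. The space $[1,n] \times X$ is compact, being a product of two compact spaces, so the closed subset $S_x$ is compact.

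Next I would push $S_x$ forward along the projection $\pi \colon [1,n] \times X \to [1,n]$, $(t,y) \mapsto t$. This map is continuous, so $\pi(S_x)$ is a compact subset of $[1,n]$. By the very definition of the $x$-hit times,
\[ \pi(S_x) = \bc{t \in [1,n] : \exists y \in X : l(t,y) = x} = T_x, \]
and since $[1,n]$ is Hausdorff, the compact set $T_x$ is closed, which is the assertion.

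As an alternative (and perhaps more hands-on) route one can argue sequentially: given $t_k \in T_x$ with $t_k \to t$, choose $y_k \in X$ with $l(t_k,y_k) = x$, extract by sequential compactness of $X$ a subsequence $y_{k_j} \to y \in X$, and deduce from the continuity of $l$ that $x = l(t_{k_j},y_{k_j}) \to l(t,y)$; hence $l(t,y) = x$ and $t \in T_x$, so $T_x$ is closed.

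There is no genuine obstacle here. The only points worth stating explicitly are that $X$, being a subset of a normed space, is metric (so that singletons are closed and, for the second argument, compactness gives sequential compactness) and that a finite product of compact spaces is compact; the remainder is the standard fact that continuous images of compact sets are compact together with the fact that compact subsets of a Hausdorff space are closed.
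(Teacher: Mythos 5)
Your proposal is correct, and in fact contains two valid proofs. Your ``alternative'' sequential argument is precisely the proof given in the paper: pick $t_k \in T_x$ with $t_k \to t$, choose witnesses $y_k$ with $l(t_k,y_k)=x$, extract a convergent subsequence by compactness of $X$, and pass to the limit using continuity of $l$. Your primary argument is a slightly different, more topological packaging of the same idea: you realise $T_x$ as the projection $\pi(\inv{l}(\bc{x}))$ of a closed (hence compact) fibre in the compact product $[1,n]\times X$, and conclude closedness from the fact that continuous images of compact sets are compact and compact subsets of Hausdorff spaces are closed. This version has the minor advantage of working verbatim for compact Hausdorff $X$ without invoking sequences (and makes transparent that compactness of $X$ is what makes the projection a closed map here), while the sequential version is more elementary and is all that is needed since $X$ is metric. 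Both are complete; there is no gap.
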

\begin{proof}
  Let $t_k \to t$ be a convergent sequence in $[1,n]$, such that for every $k \in \N$,
  there exists an $x_k \in X$ with $l(t_k,x_k) = x$.
  Since $X$ is compact, there exists a subsequence of $(x_k)_{k \in \N}$
  converging to some $y \in X$.
  Replacing the original sequence with this subsequence,
  we have $(t_k,x_k) \to (t,y)$, so that the continuity of $l$ yields $t \in T_x$,
  which had to be shown.
\end{proof}

It turns out that $T_x$ plays an important role in characterizing the behavior of the iRB operator. (See \cref{thm:fixed-point-bounded} below.)
Note that in the discrete setting, it is common to assume that the $X_i$ are pairwise disjoint for $i \in\N_n$ so that the discrete analogue $\bc{i \in\N_n : x \in X_i}$ of the $x$-hit times $T_x$ is just a singleton for every $x \in X$.
\section{Construction of an Extension}
Having introduced the iRB operator, we now outline a method to construct a map $l \colon [1,n] \times X \to Y$ given $n$ functions $\tilde{l}_i \colon X \to Y$.
This new function $l$ is then an extension of the $\tilde{l}_i$ in the sense that
$l(i,\cdot) = \tilde{l}_i$ for all $i \in\N_n$.
Our interest in this construction is motivated by the fact that it will allow us to understand the iRB operator
as a generalization of the RB operator \eqref{eq:2}.
Furthermore, it will enable us to easily provide some examples for the solutions that may arise for typical choices of
the parameter functions $l$, $q$ and $s$.

The construction relies on choosing a particular function $h \colon [0,1] \to [0,1]$,
which we often just take to be the identity function $\id_{[0,1]}$.

\begin{defn}\label{defn:extension}
  Let $X \subset X'$ and $Y \subset Y'$ be subsets of the normed vector spaces $X'$ and $Y'$, respectively, with $Y$ being convex.
  Furthermore, let $h \colon [0,1] \to [0,1]$ be a function with $h(0) = 0$ and $h(1) = 1$.
  
  For a collection of functions $\tilde{l}_i \colon X \to Y$ ($i \in\N_n$), define their {extension} with respect to $h$ to be the function
  \begin{gather*}
  l \colon [1,n] \times X \to Y,\\
   l(t,\cdot) \defi (1 - h(t - \floor{t})) \cdot \tilde{l}_{\floor{t}} + h(t - \floor{t}) \cdot \tilde{l}_{\floor{t}+1}.  
 \end{gather*}
\end{defn}
\noindent
Here and below, $\lfloor\cdot\rfloor$ denotes the floor function. 

Intuitively, the function $h$ distributes the ``influence'' that $\tilde{l}_{\floor{t}}$ and $\tilde{l}_{\floor{t}+1}$
have on $l_t$. If $h(t - \floor{t})$ is small, then $l_t$ ``is close to'' $\tilde{l}_{\floor{t}}$ and
if $h(t - \floor{t})$ is close to $1$, then $l_t$ approximates $\tilde{l}_{\floor{t}+1}$.

The extension inherits some properties from the $\tilde{l}_i$, which we summarize in the following lemma.
\begin{lem}\label{lem:extension-properties}
  The extension $l$ given in \cref{defn:extension} has the following properties:
  \begin{enumerate}
  \item If $h$ and all the $\tilde{l}_i$ are continuous, then $l$ is continuous.
  \item If all $\tilde{l}_i$ are bounded by some positive constant, then $l$ is bounded by that same constant.
  \end{enumerate}
\end{lem}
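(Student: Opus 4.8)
The plan is to verify each of the two claims directly from the explicit formula in \cref{defn:extension}, which writes $l(t,\cdot)$ as a convex combination (with weights $1 - h(t-\floor t)$ and $h(t-\floor t)$) of the two maps $\tilde l_{\floor t}$ and $\tilde l_{\floor t + 1}$. The only subtlety is that the formula is piecewise in $t$, governed by the floor function, so I would first record the decomposition $[1,n] = \bigcup_{i=1}^{n-1} [i,i+1]$ and note that on each subinterval $[i,i+1)$ we have $\floor t = i$ and the formula reads $l(t,\cdot) = (1 - h(t-i))\,\tilde l_i + h(t-i)\,\tilde l_{i+1}$, while at $t = n$ we have $\floor t = n$, $t - \floor t = 0$, $h(0) = 0$, so $l(n,\cdot) = \tilde l_n$ (here one uses the convention, implicit in the definition, that $\tilde l_{n+1}$ is irrelevant since its coefficient vanishes). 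It is worth remarking, to make the convex combination meaningful, that $h([0,1]) \subseteq [0,1]$ so the two weights are nonnegative and sum to $1$, and that $Y$ being convex guarantees $l(t,x) \in Y$.

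For part (1), I would argue that $(t,x) \mapsto l(t,x)$ is continuous on each closed strip $[i,i+1] \times X$ and then patch. On $[i,i+1] \times X$ the map is $(t,x) \mapsto (1 - h(t-i))\,\tilde l_i(x) + h(t-i)\,\tilde l_{i+1}(x)$, which is continuous as a composition and combination of the continuous maps $t \mapsto h(t-i)$, $x \mapsto \tilde l_i(x)$, $x \mapsto \tilde l_{i+1}(x)$, together with scalar multiplication and addition in the normed space $Y'$, all of which are jointly continuous. Since adjacent strips overlap only along the hyperplanes $\{t = i\} \times X$, and the two formulas agree there — at $t = i$ the left strip gives $(1-h(1))\tilde l_{i-1} + h(1)\tilde l_i = \tilde l_i$ and the right strip gives $(1-h(0))\tilde l_i + h(0)\tilde l_{i+1} = \tilde l_i$, using $h(0)=0$, $h(1)=1$ — the pasting lemma for finitely many closed sets yields continuity of $l$ on all of $[1,n] \times X$.

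For part (2), suppose $\norm{\tilde l_i(x)} \le C$ for all $i \in \N_n$ and all $x \in X$, with $C > 0$. Fix $(t,x)$ and write $\lambda \defi h(t - \floor t) \in [0,1]$. Then by the triangle inequality and homogeneity of the norm,
\[
  \norm{l(t,x)} = \norm{(1-\lambda)\,\tilde l_{\floor t}(x) + \lambda\,\tilde l_{\floor t + 1}(x)} \le (1-\lambda)\,\norm{\tilde l_{\floor t}(x)} + \lambda\,\norm{\tilde l_{\floor t + 1}(x)} \le (1-\lambda)C + \lambda C = C,
\]
so $l$ is bounded by the same constant $C$. (When $t = n$ only the term $\tilde l_n(x)$ appears and the bound is immediate.)

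The main obstacle, such as it is, is purely bookkeeping: handling the boundary values of $t$ correctly — in particular the endpoint $t = n$, where $\floor t = n$ rather than $n-1$, and the matching of the piecewise formulas across the integer nodes $t = 2, \dots, n-1$ — and being careful that the indices $\tilde l_{\floor t + 1}$ never exceed $n$ except with coefficient zero. Once the piecewise structure is laid out cleanly, both assertions follow from elementary continuity and norm estimates.
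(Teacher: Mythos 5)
Your proof is correct. For part (2) it coincides with the paper's one-line argument (convexity of the combination plus the triangle inequality), but for part (1) you take a different technical route: you restrict $l$ to the closed strips $[i,i+1]\times X$, observe that on each strip $l$ is given by the single continuous formula $(t,x)\mapsto (1-h(t-i))\,\tilde{l}_i(x)+h(t-i)\,\tilde{l}_{i+1}(x)$ (which agrees with $l$ at the right endpoint precisely because $h(1)=1$, and with the next strip's formula at the node because $h(0)=0$), and then invoke the pasting lemma for finitely many closed sets. The paper instead argues sequentially: for $t_k\to t$, $x_k\to x$ it distinguishes $t\notin\N$ (where $\floor{t_k}$ is eventually constant) from $t\in\N$ (where it splits into the subsequences $t_k\ge t$ and $t_k<t$ and checks both limits equal $\tilde{l}_t(x)$ via $h(0)=0$ and $h(1)=1$). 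The two arguments use the same essential input — the matching conditions on $h$ at $0$ and $1$ — and differ only in packaging: your pasting-lemma version is more structural and avoids subsequence bookkeeping, while the paper's is more elementary and self-contained. Your explicit handling of the endpoint $t=n$ (where $\floor{t}=n$ and the coefficient of the undefined $\tilde{l}_{n+1}$ vanishes) is a point the paper leaves implicit, and is worth having spelled out.
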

\begin{proof}
  \begin{enumerate}
  \item Let $t_k \to t$ be a sequence in the interval $[1,n]$ and $x_k \to x$ a sequence in $X$.
    If $t\notin\N$  then $\floor{t_k}$ is constant for sufficiently large $k$ and the claim
    follows by continuity of $h$ and all the $\tilde{l}_i$.
    
    In case that $t\in\N$, we split the sequence $(t_k,x_k)$ into two subsequences with $t_k \ge t$ and $t_k < t$, respectively. Then we show that both subsequences converge to the same limit by considering the following two cases:
    \begin{description}
    \item[Case 1: $t_k \ge t,\ \forall k \in \N$] Then, for sufficiently large $k$, we have
      \[ l(t_k,x_k) = (1 - h(t_k - t)) \cdot \tilde{l}_{t}(x_k) + h(t_k - t) \cdot \tilde{l}_{t+1}(x_k), \]
      so $l(t_k,x_k) \to \tilde{l}_t(x)$ since $h(0) = 0$.
    \item[Case 2: $t_k < t,\ \forall k \in \N$] Similarly, for sufficiently large $k$, we observe that
      \[ l(t_k,x_k) = (1 - h(t_k - (t - 1))) \cdot \tilde{l}_{t-1}(x_k) + h(t_k - (t - 1)) \cdot \tilde{l}_t(x_k), \]
      showing that $l(t_k,x_k) \to \tilde{l}_t(x)$ as $h(1) = 1$.
    \end{description}
  \item This is immediate, because on each interval $[i,i+1]$ ($i \in\N_{n-1}$), $l$ is a convex combination of $\tilde{l}_i$
    and $\tilde{l}_{i+1}$. \qedhere
  \end{enumerate}
\end{proof}

\cref{lem:extension-properties} shows that if $h$ and all $\tilde{l}_i$ are continuous, then $l$ amounts to a
homotopy between $\tilde{l}_i$ and $\tilde{l}_{i+1}$ for every $i \in\N_{n-1}$:
\begin{center}
  \begin{tikzcd}[column sep = large]
    [0,1] \times X \ar[r,"(\cdot + i) \times \id_X"] & {[i,i+1] \times X} \ar[r,hook] & {[1,n] \times X} \ar[r,"l"] & Y.
  \end{tikzcd}
\end{center}

Note that the $l_t \colon X \to Y$ ($t \in\N_n$) resulting from our construction of the extension $l$ need not be injective, even if all $\tilde{l}_i$ ($i \in\N_n$) are injective. We now give a simple example demonstrating this phenomenon.

\begin{ex}\label{exa:0}
  Let $X = Y = [0,1] \subset \R$ and $h \colon [0,1] \to [0,1]$ a continuous function with $h(0) = 0$ and $h(1) = 1$.
  The affine linear functions
  \[ \tilde{l}_1 \colon [0,1] \to [0,1],\ x \mapsto \tfrac{1}{2} x, \qquad \tilde{l}_2 \colon [0,1] \to [0,1],\ x \mapsto 1 - \tfrac{1}{2} x  \]
  are injective and have the extension
  \begin{align*}
  l(t,x) &= \bb{1 - h(t - \floor{t})} \cdot \tfrac{1}{2} x + h(t - \floor{t}) \cdot \bb{1 - \tfrac{1}{2} x}\\
    &= \bb{\tfrac{1}{2} - h(t - \floor{t})} \cdot x + h(t - \floor{t}). 
    \end{align*}
  By continuity of $h$, the preimage $\inv{h}(\frac{1}{2})$ is nonempty and any element $u$ in this preimage
  has the property that $l(u+1,\cdot)$ is constant.
  For instance, by choosing $h := \id_{[0,1]}$, we have $l(\frac{3}{2},\cdot) = \frac{1}{2}$.
  
  Similarly, replacing $\tilde{l}_2$ by the injective function
  \[ 
    \tilde{l}_2 \colon [0,1] \to [0,1],\ x \mapsto 1 - \tfrac{1}{2} x^2, 
  \]
  yields the extension (for $h = \id_{[0,1]}$)
  \[ l(t,x) = - \tfrac{1}{2} (t-1) \cdot x^2 + \tfrac{1}{2} (2-t) \cdot x + t - 1. \]
  In this case, the quadratic polynomial $l_t$ is not injective if and only if its maximum lies in the interval $(0,1)$,
  which turns out to be the case for $t \in (\frac{1}{3},1)$.
  This shows that it need not be the case that $l_t$ is injective for almost all $t$,
  even though all $\tilde{l}_i$ are injective.
\end{ex}

\section{Bounded Solutions}\label{sec:bounded-solutions}
We now seek bounded solutions for the iRB operator, that is, we ask under what conditions the iRB operator $T$ maps the Banach space $Z \defi B(X,F)$
of bounded functions $X \to F$ to itself and when it is a contraction.

Here, $Z$ is equipped with the supremum norm $\norm{f}_\infty \defi \sup\limits_{x \in X} \norm{f(x)}$.
Note that $T(f)$ is certainly bounded if $f \colon X \to F$, $q \colon [1,n] \times X \to F$ and $s \colon [1,n] \times X \to \R$
are bounded.

\begin{thm}\label{thm:fixed-point-bounded}
  Suppose that $q \colon [1,n] \times X \to F$ and $s \colon [1,n] \times X \to \R$ are bounded.
  Let $S \defi \sup\limits_{t \in [1,n], x \in X} \abs{s(t,x)}$ be the supremum of $s$ and
  \[ M \defi \sup_{x \in X} \lambda(T_x) = \sup_{x \in X} \lambda(\bc{t \in [1,n]: x \in X_t}) \]
  denotes the maximal Lebesgue measure $\lambda$ of all hit times $T_x$.
  If $S \cdot M < 1$, then the iRB operator $T \colon B(X,F) \to B(X,F)$ possesses a unique fixed point.
\end{thm}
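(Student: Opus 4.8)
The plan is to deduce everything from the Banach fixed point theorem. Since $Z = B(X,F)$ equipped with $\norm{\cdot}_\infty$ is a Banach space, hence a complete metric space, it suffices to establish two facts: that $T$ maps $B(X,F)$ into itself, and that $T$ is a contraction on $B(X,F)$. For the contraction constant I expect exactly $S \cdot M$, which is $< 1$ by hypothesis.

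For the self-map property, I would fix $f \in B(X,F)$ and $x \in X$ and estimate the two ($F$-valued) integrals in the representation \eqref{eq:4} separately. Using $\norm{\int_{T_x} \varphi\, dt} \le \int_{T_x} \norm{\varphi}\, dt$, the first integral is bounded by $\norm{q}_\infty\,\lambda(T_x)$ and the second by $S\,\norm{f}_\infty\,\lambda(T_x)$. Since $T_x \subset [1,n]$ we have $\lambda(T_x) \le n-1$, so in particular $M \le n-1 < \infty$, and therefore $\norm{T(f)}_\infty \le (n-1)\norm{q}_\infty + S\,M\,\norm{f}_\infty < \infty$; thus $T(f) \in B(X,F)$.

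For the contraction property, I would take $f, g \in B(X,F)$ and observe that the affine (first) term of $T$ cancels in $T(f) - T(g)$, leaving
\[
  T(f)(x) - T(g)(x) = \int_{T_x} (s_t \circ \inv{l_t})(x)\,\bb{(f\circ\inv{l_t})(x) - (g\circ\inv{l_t})(x)}\, dt .
\]
The same estimate as above then gives $\norm{T(f)(x) - T(g)(x)} \le S\,\norm{f-g}_\infty\,\lambda(T_x) \le S\,M\,\norm{f-g}_\infty$ for every $x$, hence $\norm{T(f) - T(g)}_\infty \le (S\,M)\,\norm{f-g}_\infty$. As $S\,M < 1$, $T$ is a contraction, and the Banach fixed point theorem yields a unique fixed point in $B(X,F)$.

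The routine linear estimates aside, the one point that really needs care — and which I expect to be the main obstacle — is the well-definedness of the integrals: for each fixed $x$ the integrands $t \mapsto q(t, \inv{l_t}(x))$ and $t \mapsto s(t,\inv{l_t}(x))\,f(\inv{l_t}(x))$ must be Bochner, resp. Lebesgue, integrable over $T_x$. This has to be guaranteed by the standing measurability assumptions on $l$, $q$, $s$ (and on $T_x$); the boundedness hypotheses together with $\lambda(T_x) \le n - 1$ then take care of integrability. One should also make sure that the $F$-valued integration theory in use — the Bochner integral being the natural choice, since $F$ is a Banach space — supplies the triangle-type inequality invoked above.
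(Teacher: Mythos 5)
Your proposal is correct and follows essentially the same route as the paper: the Banach fixed point theorem applied to $B(X,F)$ with the contraction estimate $\norm{T(f)-T(g)}_\infty \le S\cdot M\cdot\norm{f-g}_\infty$ obtained from the representation \eqref{eq:4}. You are somewhat more explicit than the paper about the self-map property and the integrability of the integrands (the paper disposes of the former in a remark before the theorem and leaves the latter to the standing measurability assumptions), but the core argument is identical.
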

\begin{proof}
  As $B(X,F)$ endowed with the supremum norm is a Banach space, it suffices to show that $T$ is a contraction, since the result is then a consequence of the Banach fixed point theorem.
  
  Indeed, for $f,g \in B(X,F)$ and $x \in X$, we obtain (using \eqref{eq:4})
  \begin{align*}
    \norm{T(f)(x) - T(g)(x)}_\infty
    &= \norm{\int_{T_x} (s_t \circ \inv{l_t})(x) \cdot ((f-g) \circ \inv{l_t})(x) d t}_\infty\\
    &\le \int_{T_x} \abs{(s_t \circ \inv{l_t})(x)} \cdot \norm{((f-g) \circ \inv{l_t})(x)}_\infty d t\\
    &\le S \cdot M \cdot \norm{f-g}_\infty. \qedhere
  \end{align*}
\end{proof}

By virtue of the constructive nature of the proof of the Banach fixed point theorem, we obtain immediately the following corollary. 
\begin{cor}\label{cor:fixed-point-bounded-speed-conv}
  The fixed point $f \in B(X,F)$ of the iRB operator $T$ from \cref{thm:fixed-point-bounded} is the limit of the iterates
  $f_k \defi T(f_{k-1})$, for $k \in\N$, and an arbitrarily chosen $f_0 \in B(X,F)$.
  Furthermore, we have the estimate
  \[
  \norm{f_k - f}_\infty \le \frac{S \cdot M}{1 - S \cdot M} \cdot \norm{f_k - f_{k-1}}_\infty,\quad k\in \N.
  \]
\end{cor}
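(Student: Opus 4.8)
The plan is to deduce both assertions from the Banach fixed point theorem, reusing the contraction constant $c \defi S \cdot M < 1$ that was already extracted in the proof of \cref{thm:fixed-point-bounded}: there we established $\norm{T(f) - T(g)}_\infty \le S \cdot M \cdot \norm{f-g}_\infty$ for all $f,g \in B(X,F)$. Since $B(X,F)$ equipped with $\norm{\cdot}_\infty$ is complete, Banach's theorem asserts not only that $T$ has a unique fixed point $f$, but also that for every starting point $f_0 \in B(X,F)$ the Picard iterates $f_k \defi T(f_{k-1})$ converge to $f$ in $\norm{\cdot}_\infty$. This gives the first claim of the corollary verbatim.

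For the error estimate I would argue as follows. Fix $k \in \N$. Applying the contraction property to the pair $(f_{k-1}, f)$ and using $T(f) = f$ and $T(f_{k-1}) = f_k$ gives $\norm{f_k - f}_\infty \le c \, \norm{f_{k-1} - f}_\infty$. Inserting $f_k$ via the triangle inequality yields $\norm{f_{k-1} - f}_\infty \le \norm{f_{k-1} - f_k}_\infty + \norm{f_k - f}_\infty$. Combining these two inequalities and moving the term $c \, \norm{f_k - f}_\infty$ to the left-hand side produces $(1-c)\,\norm{f_k - f}_\infty \le c \, \norm{f_k - f_{k-1}}_\infty$, and dividing by $1 - c > 0$ gives
\[ \norm{f_k - f}_\infty \le \frac{c}{1-c}\,\norm{f_k - f_{k-1}}_\infty = \frac{S \cdot M}{1 - S \cdot M}\,\norm{f_k - f_{k-1}}_\infty, \]
as claimed. (Alternatively, one may write $\norm{f_k - f}_\infty = \lim_{m\to\infty}\norm{f_k - f_m}_\infty \le \sum_{j \ge k}\norm{f_j - f_{j+1}}_\infty \le \sum_{j\ge k} c^{\,j-k}\norm{f_k - f_{k+1}}_\infty$, sum the geometric series, and then bound $\norm{f_k - f_{k+1}}_\infty \le c\,\norm{f_{k-1} - f_k}_\infty$; both routes give the same constant.)

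There is essentially no obstacle here: the statement is the classical a posteriori estimate that accompanies the Banach fixed point theorem, and the only point worth recording is that the relevant contraction constant is precisely $S \cdot M$, which was already computed inside the proof of \cref{thm:fixed-point-bounded}. If one wished to be fully self-contained, the same geometric-series computation also yields the a priori bound $\norm{f_k - f}_\infty \le \frac{(S\cdot M)^k}{1 - S\cdot M}\,\norm{f_1 - f_0}_\infty$, but this is not needed for the corollary as stated.
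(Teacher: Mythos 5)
Your proof is correct and follows the same route as the paper, which simply invokes the constructive (a posteriori) form of the Banach fixed point theorem with contraction constant $S\cdot M$ extracted in the proof of \cref{thm:fixed-point-bounded}; you merely write out the standard details that the paper leaves implicit.
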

\noindent
We illustrate the above theorem by hand of two examples.

\begin{ex}\label{exa:1}
  Let $X = [0,1] \subset \R$ and $F = \R$. Consider the affine linear functions
  \[ \tilde{l}_1 \colon [0,1] \to [0,1],\ x \mapsto \tfrac{1}{2} x, \qquad \tilde{l}_2 \colon [0,1] \to [0,1],\ x \mapsto \tfrac{1}{2} x + \tfrac{1}{2}. \]
  Their extension for $h \colon [0,1] \to [0,1]$ is
  \begin{equation}
    \label{eq:3}
    l(t,x) = \bb{1 - h(t - \floor{t})} \cdot \tfrac{1}{2} x + h(t - \floor{t}) \cdot \bb{\tfrac{1}{2} x + \tfrac{1}{2}}
    = \tfrac{1}{2} \bb{x + h(t - \floor{t})}.
  \end{equation}
  For $t \in [1,2]$ the inverse of $l_t$ is thus given by $\inv{l_t}(x) = 2x - h(t - \floor{t})$.
  The $x$-hit times $T_x = \bc{t \in [1,2] : x \in X_t}$ trivially satisfy $M \le 1$ in \cref{thm:fixed-point-bounded}.
  Furthermore, we choose $h := \id_{[0,1]}$ and set
  \begin{align*}
    q \colon [1,2] \times [0,1] \to \R,&\quad (t,x) \mapsto
    \begin{cases}
      1, & x \ge 2 - t\\
      0, & x < 2 - t
    \end{cases},\\
    s \colon [1,2] \times [0,1] \to \R,&\quad (t,x) \mapsto \tfrac{1}{2} \cdot x \cdot (t - 1).
  \end{align*}
  Since $S \cdot M \le S = \frac{1}{2} < 1$, the theorem yields the existence of a unique fixed point of the iRB operator,
  which is plotted in the figure below.
  \begin{figure}[H]
    \centering
    \includegraphics[width=3\textwidth/4]{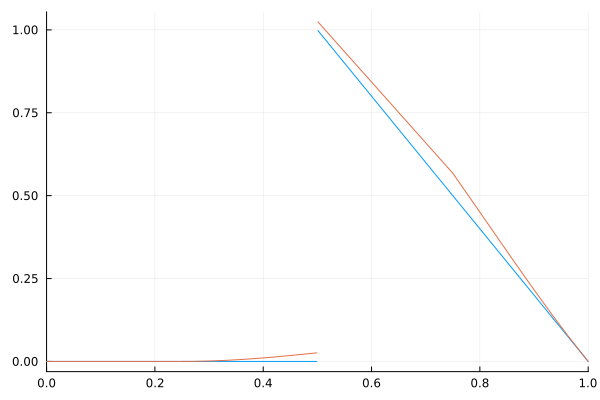}
    \vspace{-5pt}
    \caption[]{The graphs of the first (blue) and second (orange) iterate of the iRB operator,
      when starting with the zero function. The third iterate $f_3$ (not plotted) is almost indistinguishable
      from the second one $f_2$. The estimate $\norm{f_3 - f}_\infty \le \norm{f_3 - f_2}_\infty$
      from \cref{cor:fixed-point-bounded-speed-conv} for the fixed point $f$ implies that
      $f$ ``essentially looks like'' $f_3$ (at least at this scale).}
    \label{fig:exa-disc}
  \end{figure}
\end{ex}

\begin{ex}\label{exa:2}
  Choosing the same $l$ and $s$ as in \cref{exa:1} and replacing $q$ by
  \[ q \colon [1,2] \times [0,1] \to \R,\ (t,x) \mapsto 2 \cdot x \cdot (t - 1) \]
  yields the following graph:
  \begin{figure}[H]
    \centering
    \includegraphics[width=3\textwidth/4]{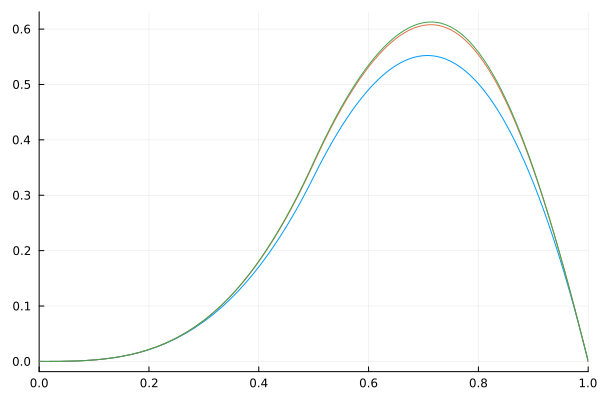}
    \vspace{-5pt}
    \caption{The graphs of the first (blue), second (orange) and third (green) iterate of the iRB operator,
      when starting with the zero function.}
    \label{fig:exa-cont}
  \end{figure}
\end{ex}

These two examples show that depending on the choice of the parameter functions $s$ and $q$,
the solutions can have a smooth or discontinuous character.
In \cref{sec:continuous-sol}, we derive a criterion guaranteeing continuity of the solution.
Furthermore, we will see in the next section that any solution of an RB operator can also be realized as a
solution of a corresponding iRB operator.
Therefore, the solutions of iRB operators can display the same ``richness'' in behavior as those of RB operators.
\section{The iRB Operator as a Generalization of the RB Operator}\label{sec:irb-operator-generalizes-rb}
The iRB operator can be seen to be a generalization of the RB operator \eqref{eq:2} using the construction of an extension presented in  \cref{defn:extension}.

To this end, let $\tilde{l}_i$, $\tilde{q}_i$ and $\tilde{s}_i$ be given parameter functions of the RB operator \eqref{eq:2},
which we want to exhibit as a special case of the iRB operator $T$ \eqref{eq:4}.
For $h := \1_{[\frac{1}{2},1]} \colon [0,1] \to [0,1]$, we denote the extensions of $\tilde{l}_i$, $\tilde{q}_i$ and $\tilde{s}_i$
by $l$, $q$, and $s$, respectively.
This means that
\[
  l_t =
  \begin{cases}
    \tilde{l}_{\floor{t}}, & t - \floor{t} < \frac{1}{2};\\
    \tilde{l}_{\floor{t}+1}, & t - \floor{t} \ge \frac{1}{2},
  \end{cases}
\]
and similarly for $q_t$ and $s_t$.
In other words, for $i \in \N_{n-1}$ and $t \in [i - \frac{1}{2},i + \frac{1}{2})$, $l_t$ is equal to $\tilde{l}_i$.
Furthermore, $l_t$ is equal to $\tilde{l}_1$ if $t \in [1,\frac{3}{2})$ and to $\tilde{l}_n$ for $t \in [n - \frac{1}{2},n]$.
Notice that the situation is slightly asymmetrical because $l$
agrees with the $\tilde{l}_i$ for $i \in \bc{2,\dots,n-1}$ on an interval of length $1$ (after projecting onto the first component),
whereas $l$ is equal to $\tilde{l}_1$ or $\tilde{l}_n$ only on an interval of length $\frac{1}{2}$.

To remedy this, we need to replace $\tilde{q}_1$ by $2 \cdot \tilde{q}_1$ and do the same for $\tilde{q}_n$, $\tilde{s}_1$, and $\tilde{s}_n$.
To simplify notation, we also set
\begin{align*}
  I(i,z,z')(x)
  &\defi \int_z^{z'} (q_i \circ \inv{l_i})(x) \1_{X_i}(x) d t \\
  & \qquad + \int_z^{z'} (s_i \circ \inv{l_i})(x) \cdot (f \circ \inv{l_i})(x) \1_{X_i}(x) d t\\
  &= (z' - z) \cdot (q_i \circ \inv{l_i})(x) \1_{X_i}(x)\\
  & \qquad + (z' - z) \cdot (s_i \circ \inv{l_i})(x) \cdot (f \circ \inv{l_i})(x) \1_{X_i}(x),
\end{align*}
for $i \in\N_n$, $z, z' \in [1,n]$, and $x \in X$.

Then, the iRB operator induced by $l$, $q$ and $s$ agrees with the ordinary RB operator \eqref{eq:2} for $\tilde{l}_i$, $\tilde{q}_i$, and $\tilde{s}_i$:
\begin{align*}
  T(f)(x)
  &= \int_1^n (q_t \circ \inv{l_t})(x) \1_{X_t}(x) d t + \int_1^n (s_t \circ \inv{l_t})(x) \cdot (f \circ \inv{l_t})(x) \1_{X_t}(x) d t\\
  &= I\bb{1,1,\frac{3}{2}}(x) + \sum_{i=2}^{n-1} I\bb{i,i - \frac{1}{2},i + \frac{1}{2}}(x) + I\bb{n,n - \frac{1}{2},n}(x)\\
  &= \sum_{i=1}^n (\tilde{q}_i \circ \inv{\tilde{l}_i})(x) \1_{\tilde{X}_i}(x) + \sum_{i=1}^n (\tilde{s}_i \circ \inv{\tilde{l}_i})(x) \cdot (f \circ \inv{\tilde{l}_i})(x) \1_{\tilde{X}_i}(x).
\end{align*}
\noindent
We summarize our result in the next theorem.
\begin{thm}[\textbf{iRB operator generalizes the RB operator}]\label{thm:iRB-gen-RB}~\\
  With $l$, $q$ and $s$ constructed as above, their induced iRB operator is equal to
  the RB operator of $\tilde{l}_i$, $\tilde{q}_i$ and $\tilde{s}_i$.
\end{thm}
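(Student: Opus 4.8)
The plan is to verify directly that, with the specific choice $h = \1_{[\frac{1}{2},1]}$ and the rescaling of $\tilde{q}_1, \tilde{q}_n, \tilde{s}_1, \tilde{s}_n$ by a factor of $2$, the integral defining $T(f)(x)$ breaks into finitely many pieces, each of which is constant in $t$, and summing those pieces reproduces the RB operator \eqref{eq:2} verbatim. The computation displayed just above the theorem statement already carries out this argument; the task of the proof is mainly to justify the three equalities in that display and to confirm that the $\1_{X_t}$ factors collapse correctly to $\1_{\tilde{X}_i}$.

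First I would record the key consequence of choosing $h = \1_{[\frac{1}{2},1]}$: on each interval $[i - \frac12, i + \frac12)$ (for $2 \le i \le n-1$), and on $[1, \frac32)$ for $i=1$ and on $[n - \frac12, n]$ for $i = n$, the triple $(l_t, q_t, s_t)$ is constant and equal to $(\tilde{l}_i, \tilde{q}_i, \tilde{s}_i)$ — here one must remember that $q$ and $s$ refer to the rescaled parameter functions on the boundary intervals, so $q_t = 2\tilde{q}_1$ for $t \in [1,\frac32)$, etc. Consequently $X_t = l_t(X) = \tilde{l}_i(X) = \tilde{X}_i$ is also constant on each such interval, so $\1_{X_t}(x) = \1_{\tilde{X}_i}(x)$ there. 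This lets me split $\int_1^n$ as $\int_1^{3/2} + \sum_{i=2}^{n-1}\int_{i-1/2}^{i+1/2} + \int_{n-1/2}^n$, which is exactly the passage from the first to the second line of the display, written in the abbreviation $I(i,z,z')(x)$. Since the integrand is constant in $t$ on each piece, each integral evaluates to $(z'-z)$ times that constant, which is the second formula given for $I(i,z,z')$.

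Then I would simply compute the lengths: the interval for $i = 1$ has length $\frac32 - 1 = \frac12$, so $I(1,1,\frac32)(x) = \frac12 \bigl[(q_1 \circ \inv{l_1})(x)\1_{\tilde X_1}(x) + (s_1 \circ \inv{l_1})(x)(f\circ\inv{l_1})(x)\1_{\tilde X_1}(x)\bigr]$; since $q_1 = 2\tilde q_1$ and $s_1 = 2\tilde s_1$ and $l_1 = \tilde l_1$, the factor $\frac12$ cancels the $2$ and we get exactly the $i=1$ term of \eqref{eq:2}. The interior intervals have length $1$ and no rescaling is applied, so $I(i, i-\frac12, i+\frac12)(x)$ is already the $i$-th term of \eqref{eq:2} for $2 \le i \le n-1$. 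The interval for $i=n$ has length $n - (n-\frac12) = \frac12$, and again the rescaling by $2$ of $\tilde q_n$ and $\tilde s_n$ cancels it. Summing over $i$ gives the third line of the display, which is precisely the RB operator \eqref{eq:2} applied to $f$, so $T(f)(x)$ coincides with it for every $f$ and every $x \in X$.

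There is no serious obstacle here; the argument is a direct bookkeeping verification. The only point requiring a little care is the handling of the two half-length boundary intervals and the compensating factor $2$ — one must be explicit that "$q$" and "$s$" in the iRB operator denote the extensions of the \emph{rescaled} functions $2\tilde q_1, \tilde q_2, \dots, \tilde q_{n-1}, 2\tilde q_n$ (and likewise for $s$), not of the original $\tilde q_i$. One should also note in passing that $h = \1_{[\frac12,1]}$ satisfies $h(0)=0$ and $h(1)=1$, so \cref{defn:extension} does apply, and that the resulting $l_t$ are injective for a.e.\ $t$ (indeed for all $t$, since each equals some $\tilde l_i$), so that the iRB operator \eqref{eq:4} is legitimately defined in this setting.
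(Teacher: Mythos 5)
Your proposal is correct and follows essentially the same route as the paper: the paper's argument is precisely the displayed computation preceding the theorem, splitting $\int_1^n$ into $[1,\tfrac32)$, the interior intervals $[i-\tfrac12,i+\tfrac12)$, and $[n-\tfrac12,n]$, with the factor $2$ on $\tilde q_1,\tilde q_n,\tilde s_1,\tilde s_n$ compensating the half-length boundary pieces. Your added remarks (that $h=\1_{[\frac12,1]}$ satisfies the hypotheses of \cref{defn:extension} and that each $l_t$ is injective) are correct and only make the bookkeeping more explicit.
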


In particular, any function that is a solution of the RB operator can also be realized as a solution of the corresponding iRB operator.

\begin{ex}
\label{exa:3}
  For example, the RB operator \eqref{eq:2} induced by
  \begin{align*}
    \tilde{l}_1 \colon [0,1) \to [0,1),\ x \mapsto \tfrac{1}{2} x, &\qquad \tilde{l}_2 \colon [0,1) \to [0,1),\ x \mapsto \tfrac{1}{2} x + \tfrac{1}{2},\\
    \tilde{q}_1 \colon [0,1) \to \R,\ x \mapsto \tfrac{1}{2} x,  &\qquad \tilde{q}_2 \colon [0,1) \to \R,\ x \mapsto - \tfrac{1}{2} x + \tfrac{1}{2},\\
    \tilde{s}_1 \colon [0,1) \to \R,\ x \mapsto \tfrac{1}{4},  &\qquad \tilde{s}_2 \colon [0,1) \to \R,\ x \mapsto \tfrac{1}{4}
  \end{align*}
  is
  \[
    \tilde{T}(f)(x) =
    \begin{cases}
      x + \frac{1}{4} \cdot f(2x), & x < \frac{1}{2};\\
      -x + 1 + \frac{1}{4} \cdot f(2x-1), & x \ge \frac{1}{2}.
    \end{cases}
  \]
  A straightforward calculation shows that the parabola $x \mapsto 2x(1-x)$ is a fixed point of $\tilde{T}$.
  We replace $\tilde{q}_1$ by $x \mapsto x$, $\tilde{q}_2$ by $x \mapsto -x + 1$,
  $\tilde{s}_1$ by $x \mapsto \frac{1}{2}$ and $\tilde{s}_2$ by $x \mapsto \frac{1}{2}$
  and then form the extensions $l$, $s$ and $q$ with respect to $h=\1_{[\frac{1}{2},1]}$.
  By \cref{thm:iRB-gen-RB}, the resulting iRB operator $T$ is precisely the RB operator $\tilde{T}$.
  As the iRB operator has a unique fixed point by \cref{thm:fixed-point-bounded},
  the iterates $(T^k(0))_{k \in \N}$ (when, e.g., starting with the zero function)
  must converge to this parabola.
  This is confirmed by plotting a couple of iterations.
  \begin{figure}[H]
    \centering
    \includegraphics[width=3\textwidth/4]{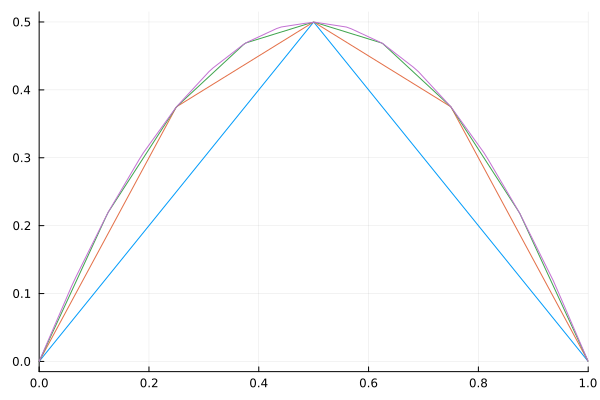}
    \vspace{-5pt}
    \caption{The first four (blue, orange, green, purple) iterates of the iRB operator
      when starting with the zero function. It is apparent that they converge to the parabola $x \mapsto 2x(1-x)$.}
  \end{figure}
\end{ex}

Similarly, we can realize the Takagi function (see \cite{takagi1973}) as a fixed point of the iRB operator (by multiplying $\tilde{s}_1$ and $\tilde{s}_2$ in the previous example by $2$) as it is the fixed point of an RB operator.

It should be noted that the extensions $l$, $q$ and $s$ in \cref{thm:iRB-gen-RB} are generally discontinuous
due to the discontinuity of $h$ at $\frac{1}{2}$.\\
We might also want to understand the RB operator as a special case of the iRB operator
that is induced by continuous $l$, $q$ and $s$.
For this purpose, we set $Z = B(X,F)$ and assume that we are given continuous $\tilde{l}_i$, $\tilde{q}_i$ and $\tilde{s}_i$
inducing an RB operator $\tilde{T}$ as in \eqref{eq:2}.

Then the idea is to replace the expression for $h = \1_{[\frac{1}{2},1]}$ given above by a sequence of continuous functions $h^{(k)}$
that converges pointwise to $h$;
for instance we can choose (for $k \ge 2$)
\[
  h^{(k)} \colon [0,1] \to [0,1],\ x \mapsto
  \begin{cases}
    0, & x \le \frac{1}{2} - \frac{1}{k};\\
    k\cdot (x + \frac{1}{k} - \frac{1}{2}), & \frac{1}{2} - \frac{1}{k} < x < \frac{1}{2};\\
    1, & x \ge \frac{1}{2}.
  \end{cases}
\]
Applying the above construction (with respect to $h^{(k)}$) to $\tilde{l}_i$, $\tilde{q}_i$, and $\tilde{s}_i$
(and again replacing $\tilde{q}_1$ by $2 \cdot \tilde{q}_1$ and similarly for $\tilde{q}_n$, $\tilde{s}_1$ and $\tilde{s}_n$)
yields a sequence of functions $l^{(k)}$, $q^{(k)}$, and $s^{(k)}$.
However, two subtleties arise in this context.
On the one hand, we need to assume that $X$ is convex in order for $l^{(k)}$ to be a function $[1,n] \times X \to X$
(instead of having codomain $E$), which is the setting we are interested in. (See \cref{sec:integral-RB-operator}.)
On the other hand, we need to ensure that the extensions $l^{(k)}$ have the property that for every $k \ge 2$,
$l_t^{(k)}$ is injective for almost all $t \in [1,n]$.
This is not automatically satisfied, as demonstrated by \cref{exa:0},
but it is guaranteed if the set
\[ \bc{t \in [0,1] : (1-t) \cdot \tilde{l}_j + t \cdot \tilde{l}_{j+1} \text{ not injective}} \]
is a (Lebesgue) null set for all $j \in\N_{n-1}$.

Hence, for every $k \ge 2$, we may construct a corresponding iRB operator $T^{(k)} \colon Z \to Z$.
By \cref{lem:extension-properties}, all $T^{(k)}$ are induced by continuous $l^{(k)}$, $q^{(k)}$ and $s^{(k)}$.

This construction allows us to approximate the RB operator, as demonstrated by the following theorem.

\begin{thm}[\textbf{iRB operator approximates the RB operator}]\label{thm:iRB-approx-RB}~\\
  Let $X\subset E$ be convex. For $i \in\N_n$, let $\tilde{l}_i \colon X \to X$ be injective and $\tilde{q}_i \colon X \to F$, $\tilde{s}_i \colon X \to \R$ be bounded.
  Furthermore, assume that all these functions are continuous.
  Denote their RB operator as defined in \eqref{eq:2} by $\tilde{T}$.
  If for all $j \in\N_{n-1}$, the set
  \[ \bc{t \in [0,1] : (1-t) \cdot \tilde{l}_j + t \cdot \tilde{l}_{j+1} \text{ not injective}} \]
  is a null set, then the sequence $T^{(k)} \colon B(X,F) \to B(X,F)$ of iRB operators constructed above
  is well-defined.
  It converges pointwise to the RB operator $\tilde{T}$.
\end{thm}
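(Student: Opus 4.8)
The plan is to prove the two assertions in turn: that each $T^{(k)}$ is a genuine iRB operator on $B(X,F)$, and that $T^{(k)}(f)(x) \to \tilde{T}(f)(x)$ for every $f \in B(X,F)$ and $x \in X$.

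\emph{Well-definedness.} Three points have to be checked for each fixed $k \ge 2$. First, $l^{(k)}$ has codomain $X$: on each interval $[i,i+1)$ the map $l^{(k)}_t$ is the convex combination $(1 - h^{(k)}(t-i))\,\tilde{l}_i + h^{(k)}(t-i)\,\tilde{l}_{i+1}$ of two maps into $X$, so convexity of $X$ does the job (this is exactly why convexity was imposed). Second, $T^{(k)}$ preserves boundedness: the modified families (with $\tilde{q}_1,\tilde{q}_n,\tilde{s}_1,\tilde{s}_n$ doubled) are still bounded, so their extensions $q^{(k)}, s^{(k)}$ are bounded by \cref{lem:extension-properties}(2), and then $T^{(k)}(f) \in B(X,F)$ for $f \in B(X,F)$ as noted at the beginning of \cref{sec:bounded-solutions}. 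Third, and this is where the null-set hypothesis enters, $l^{(k)}_t$ is injective for almost every $t$: on $[i,i+1)$ the map $l^{(k)}_t$ is non-injective only if $h^{(k)}(t-i)$ lies in the set $B_i \defi \{r \in [0,1] : (1-r)\tilde{l}_i + r\tilde{l}_{i+1} \text{ not injective}\}$; by hypothesis $B_i$ is null, and since $\tilde{l}_i,\tilde{l}_{i+1}$ are injective we have $0,1 \notin B_i$, so only $t$ with $t-i$ in the transition interval $(\tfrac12 - \tfrac1k, \tfrac12)$ can contribute, and on that interval $h^{(k)}$ is affine with slope $k \ne 0$, hence bi-Lipschitz, hence pulls $B_i$ back to a null set of parameters; the union over $i \in \N_{n-1}$ is still null. (Measurability of the hit-time sets $T^{(k)}_x$, required in \eqref{eq:4}, is immediate off the transition intervals, where $l^{(k)}$ is locally constant in $t$, and on each transition interval reduces, after the affine change of variable $r = h^{(k)}(t-i)$, to measurability of the projection of $\{(r,y) : (1-r)\tilde{l}_i(y) + r\tilde{l}_{i+1}(y) = x\}$, an analytic set under mild regularity of $X$; this is the measurability issue already flagged in \cref{sec:integral-RB-operator}.)

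\emph{Pointwise convergence.} Fix $f \in B(X,F)$ and $x \in X$. Up to a null set, $[1,n]$ splits into the $n-1$ transition intervals $I_i$ ($i \in \N_{n-1}$), each of length $\tfrac1k$, and the plateaus $R^{(k)}_j \defi \{t : l^{(k)}_t = \tilde{l}_j\}$, $j \in \N_n$, with $\lambda(R^{(k)}_j) \to 1$ for $2 \le j \le n-1$ and $\lambda(R^{(k)}_j) \to \tfrac12$ for $j \in \{1,n\}$ (the endpoint plateaus being only half-length is precisely what the doubling compensates). On $R^{(k)}_j$ one has $l^{(k)}_t = \tilde{l}_j$, $X^{(k)}_t = \tilde{X}_j$, and $q^{(k)}_t, s^{(k)}_t$ equal to the corresponding modified constants, so the integrand in \eqref{eq:4} is constant there and equals
\[ \beta_j(x) \defi (q^\ast_j \circ \inv{\tilde{l}_j})(x)\,\1_{\tilde{X}_j}(x) + (s^\ast_j \circ \inv{\tilde{l}_j})(x)\cdot(f \circ \inv{\tilde{l}_j})(x)\,\1_{\tilde{X}_j}(x), \]
where $(q^\ast_j, s^\ast_j) = (\tilde{q}_j, \tilde{s}_j)$ for $2 \le j \le n-1$ and $(q^\ast_j, s^\ast_j) = (2\tilde{q}_j, 2\tilde{s}_j)$ for $j \in \{1,n\}$; the indicator takes care of $x \notin \tilde{X}_j$, for which $x \notin X^{(k)}_t = \tilde{X}_j$ and the contribution vanishes anyway. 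Therefore
\[ T^{(k)}(f)(x) = \sum_{j=1}^n \lambda(R^{(k)}_j)\,\beta_j(x) + \sum_{i=1}^{n-1} \int_{I_i}(\cdots)\,d t. \]
As $k \to \infty$, each $|\beta_j(x)| \le \|\tilde{q}_j\|_\infty + \|\tilde{s}_j\|_\infty \|f\|_\infty < \infty$ (using $\inv{\tilde{l}_j}(x) \in X$), so the first sum tends to $\sum_{j=1}^n \big[(\tilde{q}_j \circ \inv{\tilde{l}_j})(x)\,\1_{\tilde{X}_j}(x) + (\tilde{s}_j \circ \inv{\tilde{l}_j})(x)(f \circ \inv{\tilde{l}_j})(x)\,\1_{\tilde{X}_j}(x)\big] = \tilde{T}(f)(x)$ by \eqref{eq:2}, the factor $2$ at $j \in \{1,n\}$ cancelling the limit $\tfrac12$. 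Each of the $n-1$ remaining integrals has absolute value at most $\tfrac1k\big(\|q^{(k)}\|_\infty + \|s^{(k)}\|_\infty\|f\|_\infty\big) \le \tfrac1k\big(2\max_i\|\tilde{q}_i\|_\infty + 2\max_i\|\tilde{s}_i\|_\infty\|f\|_\infty\big)$, so the second sum is $O(1/k) \to 0$. Hence $T^{(k)}(f)(x) \to \tilde{T}(f)(x)$; since all the bounds used are independent of $x$, the convergence is actually uniform on $X$.

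\emph{Main obstacle.} The only genuinely fiddly parts are the bookkeeping that matches the asymmetric endpoint plateaus with the compensating factor $2$, and the verification that $l^{(k)}_t$ is injective for almost all $t$, which is where the null-set hypothesis is used (via the observation that $h^{(k)}$ pulls null sets back to null sets on the transition interval). Once these are settled, the convergence itself is just ``constant integrand on pieces whose total weighted length tends to the right value, plus vanishing integrand on $n-1$ pieces of length $\tfrac1k$.''
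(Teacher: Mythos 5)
Your proof is correct and follows essentially the same route as the paper: both isolate the $n-1$ transition intervals $\N + (\tfrac12-\tfrac1k,\tfrac12)$ of total length $(n-1)/k$ and kill their contribution using the uniform bound $2\max_i\norm{\tilde{q}_i}_\infty$ (resp.\ for $s$) from \cref{lem:extension-properties}. The only organizational differences are that the paper identifies the limit by invoking \cref{thm:iRB-gen-RB} and estimates $\norm{T(f)-T^{(k)}(f)}_\infty$ directly (so the plateau contributions cancel exactly), whereas you recompute the plateau measures $\lambda(R^{(k)}_j)\to 1$ (resp.\ $\tfrac12$, compensated by the doubling) from scratch; you also spell out the a.e.-injectivity and well-definedness checks that the paper only asserts in the discussion preceding the theorem.
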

\begin{proof}
  We need to verify the pointwise convergence $T^{(k)} \to \tilde{T}$.
  By \cref{thm:iRB-gen-RB}, we can construct $l$, $q$ and $s$, such that the iRB operator $T$ induced
  by them is equal to the RB operator $\tilde{T}$.
  Therefore, it suffices to show that $T^{(k)} \to T$.
  
  Let $f \in B(X,F)$, $k \ge 2$ and set
  \[ 
  A(x) := \int_1^n (q_t \circ \inv{l_t})(x) \1_{X_t}(x) d t
  \]
  and
  \[ 
  B(x) := \int_1^n (s_t \circ \inv{l_t})(x) \cdot (f \circ \inv{l_t})(x) \1_{X_t}(x) d t, 
  \]
  so that $T(f) = A + B$. We employ a similar notation for $T^{(k)}$ to obtain $A^{(k)}$ and $B^{(k)}$ with $T^{(k)}(f) = A^{(k)} + B^{(k)}$. Then we have
  \[ \norm{T(f) - T^{(k)}(f)}_\infty \le \norm{A - A^{(k)}}_\infty + \norm{B - B^{(k)}}_\infty. \]
  By construction, $l^{(k)}_t$ agrees with $l_t$ for all $t \notin \N + (\frac{1}{2} - \frac{1}{k}, \frac{1}{2})$.
  The analogous statement is true for $q^{(k)}_t$ and $s^{(k)}_t$.
  Therefore, the first summand simplifies to
  \begin{equation}\label{ast}
   \sup_{x \in X} \sum_{i=1}^{n-1} \int_{i + \frac{1}{2} - \frac{1}{k}}^{i + \frac{1}{2}} \norm{(q_t \circ \inv{l_t})(x) \1_{X_t}(x) - (q^{(k)}_t \circ \inv{l^{(k)}_t})(x) \1_{X^{(k)}_t}(x)} d t. 
  \end{equation}
  By \cref{lem:extension-properties}, all $q_t$ and $q_t^{(k)}$ are bounded by $C \defi \max\limits_{i \in\N_n}\, \norm{\tilde{q}_i}_\infty$.
  Since this constant is independent of $k$, $t$ and $x$, \eqref{ast} is bounded above by
  \[ \sup_{x \in X} \sum_{i=1}^{n-1} \int_{i + \frac{1}{2} - \frac{1}{k}}^{i + \frac{1}{2}} 2 C d t = 2 (n-1) C \cdot \tfrac{1}{k}\: \stackrel{k \to \infty}{\longrightarrow}\: 0. \]
  The analogous argument applies to the second summand $\norm{B - B^{(k)}}_\infty$ which establishes the assertion.
\end{proof}

In general, this convergence is not uniform, as the following example shows.

\begin{ex}
  Consider $l$ from \cref{exa:1} but with $h = \1_{[\frac{1}{2},1]}$, that is, we have
  \[ l(t,x) = \tfrac{1}{2} \bb{x + \1_{[\frac{3}{2},2]}(t)}, \qquad \inv{l_t}(x) = 2x - \1_{[\frac{3}{2},2]}(t). \]
  Additionally, we choose $\tilde{q}_1 = \tilde{q}_2 = 0$ and $\tilde{s}_1 = \tilde{s}_2 = 1$.
  Then the extension of the $\tilde{q}_i$ is $0$ and the extension of the $\tilde{s}_i$ is $1$,
  so that the iRB operator is just
  \[ T(f)(x) = \int_1^2 (f \circ \inv{l_t})(x) \1_{X_t}(x) d t. \]
  We construct $l^{(k)}$, $q^{(k)}$ and $s^{(k)}$ as above and
  note that $q_t^{(k)} = 0$ and $s_t^{(k)} = 1$ for all $t \in [1,2]$ and $k \ge 2$.
  Moreover, just like in the previous proof, we observe that
  $l_t^{(k)} = l_t$ for $t \notin (\frac{3}{2} - \frac{1}{k}, \frac{3}{2})$ and $k \ge 2$.\\
  On the other hand, if $t \in (\frac{3}{2} - \frac{1}{k}, \frac{3}{2})$
  then $l_t = l_1$, implying that
  \[ \norm{T(f) - T^{(k)}(f)}_\infty
    \hspace{-5pt} = \hspace{-3pt} \sup_{x \in [0,1]} \abs{\int_{\frac{3}{2} - \frac{1}{k}}^{\frac{3}{2}} f(\inv{l_1}(x)) \1_{X_1}(x) - f(\inv{l_t^{(k)}}(x)) \1_{X_t^{(k)}}(x) d t}, \]
  where we have
  \begin{align*}
    X_1 &= l_1([0,1]) = \bs{0,\tfrac{1}{2}},\\
    X_t^{(k)} &= l_t^{(k)}([0,1]) = \bs{\tfrac{1}{2} k \bb{t + \tfrac{1}{k} - \tfrac{3}{2}}, \tfrac{1}{2} + \tfrac{1}{2} k \bb{t + \tfrac{1}{k} - \tfrac{3}{2}}}, 
  \end{align*}
  because by definition of $h^{(k)}$ and \eqref{eq:3}:
  \[ 
    l_t^{(k)}(x) = \tfrac{1}{2} \bb{x + k \cdot \bb{t + \tfrac{1}{k} - \tfrac{3}{2}}} , \quad \inv{l_t^{(k)}}(x) = 2x - k \cdot t + \tfrac{3}{2} k - 1.
  \]
  Note that $X_t^{(k)}$ contains $\frac{1}{2}$ for every $t \in [\frac{3}{2} - \frac{1}{k}, \frac{3}{2}]$, since
  it is a closed interval of length $\frac{1}{2}$ with infimum between $0$ and $\frac{1}{2}$.
  Choosing $f \in B([0,1],\R)$ such that $f(1) = 0$, it follows that
  \[ \norm{T(f) - T^{(k)}(f)}_\infty \ge \abs{\int_{\frac{3}{2} - \frac{1}{k}}^{\frac{3}{2}} f \bb{-k \cdot t + \tfrac{3}{2} k} d t} = \tfrac{1}{k} \abs{\int_0^1 f(u) d u}, \]
  where we used $x = \frac{1}{2}$ first and then made the substitution $u = -k \cdot t + \frac{3}{2} k$.
  Therefore, for an arbitrary $k \ge 2$, we may choose $f = k \cdot \1_{[0,1)} \in B([0,1], \R)$ which implies that
  $\norm{T(f) - T^{(k)}(f)}_\infty \ge 1$, showing that the convergence $T^{(k)} \to T$ is not uniform.
\end{ex}
\section{Continuous Solutions}\label{sec:continuous-sol}
We now examine the continuity of the unique fixed point that is guaranteed by \cref{thm:fixed-point-bounded}.
That is, we restrict the iRB operator $T \colon B(X,F) \to B(X,F)$ to the space of bounded, continuous functions $C^0_b(X,F)$.
More precisely, we assume that $f \in C^0_b(X,F)$ and that all assumptions of \cref{thm:fixed-point-bounded} are satisfied and investigate what conditions guarantee that $T(f) \in C^0_b(X,F)$.

To that end, we consider a convergent sequence $x_k \to x$ in $X$
and first focus on the affine part
\[ X \to F,\ x \mapsto \int_1^n (q_t \circ \inv{l_t})(x) \1_{X_t}(x) d t \]
of the iRB operator.
As $q \colon [1,n] \times X \to F$ is bounded,
the dominated convergence theorem (for Bochner integrals) implies that
\[ \lim_{k \to \infty} \int_1^n (q_t \circ \inv{l_t})(x_k) \1_{X_t}(x_k) d t = \int_1^n \lim_{k \to \infty} (q_t \circ \inv{l_t})(x_k) \1_{X_t}(x_k) d t, \]
assuming the limit $\lim_{k \to \infty} (q_t \circ \inv{l_t})(x_k) \1_{X_t}(x_k)$ exists
for almost all $t \in [1,n]$.
Since we want $T(f)$ to be continuous, we must ensure that this limit is equal to
$(q_t \circ \inv{l_t})(x) \1_{X_t}(x)$.\\
For this purpose, we introduce the following terminology.
Given an arbitrary family of functions $\bc{g_t \colon Y \to Z}_{t \in [1,n]}$, their \textit{set of discontinuity times} at $y \in Y$ is
\[ D_y(g) \defi \bc{t \in [1,n]: g_t \text{ discontinuous at $y$}} \subset [1,n]. \]

If we demand that for every $x \in X$, the set of discontinuity times $D_x(q_t \circ \inv{l_t})$
is a null set and that the same is true for $\1_{X_t} \colon X \to \R$, then as desired
\[ \lim_{k \to \infty} (q_t \circ \inv{l_t})(x_k) \1_{X_t}(x_k) = (q_t \circ \inv{l_t})(x) \1_{X_t}(x) \]
for almost all $t \in [1,n]$.
By the following lemma, the latter assumption can be simplified.

\begin{lem}
  For $t \in [1,n]$, we have
  \[ x \in \partial X_t \ \iff\ \1_{X_t} \colon X \to \R \text{ discontinuous at $x$}. \]
  Here, the boundary operator $\partial$ is understood with respect to $X$.
\end{lem}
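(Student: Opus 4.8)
The plan is to prove both implications of the equivalence directly from the definition of continuity of an indicator function on the subspace $X$, keeping careful track of the fact that the boundary $\partial X_t$ is taken relative to $X$. Recall that $\1_{X_t} \colon X \to \R$ is continuous at $x$ precisely when there is a neighbourhood (relative to $X$) of $x$ on which $\1_{X_t}$ is constant, i.e.\ which lies entirely inside $X_t$ or entirely inside $X \setminus X_t$. Negating this, $\1_{X_t}$ is discontinuous at $x$ exactly when every relative neighbourhood of $x$ meets both $X_t$ and its complement in $X$ — which is the definition of $x$ belonging to the boundary of $X_t$ relative to $X$. So in essence the statement is the elementary topological fact that an indicator function is discontinuous exactly on the (relative) topological boundary of the set it indicates; the proof is just unwinding the two definitions.

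Concretely, I would argue as follows. For the direction ``$x \in \partial X_t \Rightarrow \1_{X_t}$ discontinuous at $x$'': since $x \in \partial X_t = \overline{X_t} \cap \overline{X \setminus X_t}$ (closures in $X$), pick sequences $y_k \to x$ with $y_k \in X_t$ and $z_k \to x$ with $z_k \in X \setminus X_t$. Then $\1_{X_t}(y_k) = 1$ for all $k$ while $\1_{X_t}(z_k) = 0$ for all $k$, so no single value can be the limit of $\1_{X_t}$ along every sequence converging to $x$; hence $\1_{X_t}$ is discontinuous at $x$. (One should note $x$ itself may lie in $X_t$ or not; either way the mismatch of the two constant sequences forces discontinuity.) For the converse, suppose $x \notin \partial X_t$. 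Then either $x \in \operatorname{int}_X X_t$ or $x \in \operatorname{int}_X(X \setminus X_t)$; in the first case there is a relative neighbourhood $U$ of $x$ with $U \subset X_t$, so $\1_{X_t} \equiv 1$ on $U$ and $\1_{X_t}$ is continuous at $x$; in the second case $\1_{X_t} \equiv 0$ on a relative neighbourhood of $x$ and again $\1_{X_t}$ is continuous at $x$. Contraposing gives the remaining implication.

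The only point that needs a little care — and what I would flag as the (mild) main obstacle — is the insistence that all topological notions (boundary, interior, neighbourhood, closure, convergence of sequences) are taken in the subspace topology of $X$, not in $E$. A point of $X$ that is a boundary point of $X_t$ in $E$ need not be one relative to $X$, and vice versa, so one must not slip into ambient-space reasoning. Since $X$ is merely a subset of a normed space and not assumed metrizably nice beyond that, I would phrase everything in terms of relative open balls $B(x,\varepsilon) \cap X$, which is legitimate because $X$ inherits a metric from $E$; this makes the sequential characterizations of closure and continuity used above valid without further hypotheses. With that convention fixed, both directions are immediate from the definitions, and no further assumptions on $X$ or $X_t$ are required.
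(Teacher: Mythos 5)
Your proof is correct and follows essentially the same route as the paper: a sequential argument for the forward implication and the contrapositive (local constancy on a relative neighbourhood) for the converse. The only cosmetic difference is that you extract both witnessing sequences at once from $\partial X_t = \overline{X_t} \cap \overline{X \setminus X_t}$, whereas the paper splits into the cases $x \in X_t$ and $x \notin X_t$ and produces only the one sequence needed in each.
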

\begin{proof}
  Assume that $x \in \partial X_t$ and consider the case that $x \notin X_t$.
  As $x \in \overline{X_t}$ (the closure of $X_t$) there exists a sequence $x_k \in X_t$ that converges to $x$ such that $\1_{X_t}$ is discontinuous at $x$.
  If instead $x \in X_t$, then there exists a sequence $x_k \notin X_t$ that converges to $x$;
  for if not, then there must be some open neighborhood of $x$ in $X$ that is contained in $X_t$,
  implying that $x$ is in the interior of $X_t$.\\
  On the other hand, suppose that $x \notin \partial X_t$.
  If $x \notin \overline{X_t}$, then any sequence converging to $x$ must eventually lie in the complement of $X_t$,
  so that $\1_{X_t}$ is continuous at $x$.
  Similarly, if $x$ is in the interior of $X_t$, then any sequence converging to $x$ must eventually lie in
  that interior, showing that $\1_{X_t}$ is continuous at $x$.
\end{proof}

Therefore, for any $x \in X$, we have
\[ \bc{t \in [1,n]: \1_{X_t} \colon X \to \R \text{ discontinuous at $x$}} = \bc{t \in [1,n] : x \in \partial X_t} \]
and we demand that this latter set is a null set.\\
An analogous argument for the linear part
\[ X \to F,\ x \mapsto \int_1^n (s_t \circ \inv{l_t})(x) \cdot (f \circ \inv{l_t})(x) \1_{X_t}(x) d t \]
of the iRB operator shows that $T(f)$ is continuous
if we additionally require $D_x(\inv{l_t})$ and $D_x(s_t \circ \inv{l_t})$ to be null sets for all $x \in X$.
Note that this is in particular the case when these functions are continuous for almost all $t \in [1,n]$.

Under these assumptions, the iRB operator $T \colon B(X,F) \to B(X,F)$ restricts to $C^0_b(X,F)\to C^0_b(X,F)$ and by applying the Banach fixed point theorem, we obtain a bounded continuous fixed point. By uniqueness, it must also be the fixed point of $T \colon B(X,F) \to B(X,F)$.
This establishes the following theorem.

\begin{thm}\label{thm:fixed-point-cont}
  Within the setting of \cref{thm:fixed-point-bounded}, 
  assume further that for all $x \in X$, the discontinuity times $D_x(\inv{l_t})$, $D_x(q_t \circ \inv{l_t})$ and $D_x(s_t \circ \inv{l_t})$ are null sets
  (which in particular is the case if these functions are continuous for almost all $t \in [1,n]$)
  and that the set $\bc{t \in [1,n] \colon x \in \partial X_t}$ is also a null set for all $x \in X$.\\
  Then, the unique fixed point of the iRB operator $T \colon B(X,F) \to B(X,F)$ is continuous.
\end{thm}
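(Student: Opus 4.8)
The plan is to verify that the iRB operator $T$ restricts to a self-map of $C^0_b(X,F)$; once this is done, the conclusion follows immediately, because $C^0_b(X,F)$ is a closed subspace of the Banach space $B(X,F)$ (uniform limits of continuous functions are continuous), hence itself complete, and the contraction estimate from the proof of \cref{thm:fixed-point-bounded} holds verbatim on this smaller space; the Banach fixed point theorem then produces a unique fixed point in $C^0_b(X,F)$, which by uniqueness in $B(X,F)$ must coincide with the fixed point from \cref{thm:fixed-point-bounded}. So the entire content is the self-map property: given $f \in C^0_b(X,F)$, show $T(f) \in C^0_b(X,F)$.

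First I would fix $f \in C^0_b(X,F)$ and a convergent sequence $x_k \to x$ in $X$, and split $T(f) = A + B$ into its affine part $A(x) = \int_1^n (q_t \circ \inv{l_t})(x)\1_{X_t}(x)\,dt$ and linear part $B(x) = \int_1^n (s_t \circ \inv{l_t})(x)\cdot(f\circ\inv{l_t})(x)\1_{X_t}(x)\,dt$. For $A$, the integrands are uniformly bounded in $k$ by $\norm{q}_\infty$ (a constant dominating function on the finite-measure space $[1,n]$), so the dominated convergence theorem for Bochner integrals applies and reduces the claim to the pointwise-in-$t$ convergence $(q_t \circ \inv{l_t})(x_k)\1_{X_t}(x_k) \to (q_t \circ \inv{l_t})(x)\1_{X_t}(x)$ for almost all $t$. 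This pointwise convergence is exactly where the discontinuity-time hypotheses enter: for a fixed $t$, if $t \notin D_x(q_t\circ\inv{l_t})$ then $(q_t\circ\inv{l_t})(x_k)\to(q_t\circ\inv{l_t})(x)$, and if $x \notin \partial X_t$ — equivalently, by the preceding lemma, $\1_{X_t}$ is continuous at $x$ — then $\1_{X_t}(x_k)\to\1_{X_t}(x)$; the set of ``bad'' $t$ is contained in $D_x(q_t\circ\inv{l_t}) \cup \{t : x\in\partial X_t\}$, a union of two null sets by hypothesis, hence null. For $B$ the argument is the same, with the added factor $(f\circ\inv{l_t})(x_k)$: since $f$ is continuous, the composite $f\circ\inv{l_t}$ is continuous at $x$ whenever $\inv{l_t}$ is, i.e. whenever $t\notin D_x(\inv{l_t})$, and the integrands are dominated by $\norm{s}_\infty\cdot\norm{f}_\infty$; the bad set is now contained in $D_x(\inv{l_t}) \cup D_x(s_t\circ\inv{l_t}) \cup \{t:x\in\partial X_t\}$, again null. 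Hence $A(x_k)\to A(x)$ and $B(x_k)\to B(x)$, so $T(f)$ is continuous at $x$, and since $x$ was arbitrary, $T(f)\in C^0_b(X,F)$.

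I anticipate the only genuinely delicate point is the bookkeeping that ensures the exceptional $t$-set is null \emph{for the fixed $x$} while the dominated convergence theorem is being applied along the sequence $x_k$: one must be careful that the null set is allowed to depend on $x$ (it is — each application of DCT is for a fixed target point $x$) but not on $k$, which is fine since the sets $D_x(\cdot)$ and $\{t:x\in\partial X_t\}$ depend only on $x$. A second minor subtlety is that $\inv{l_t}$ is only defined where $l_t$ is injective, i.e. for almost all $t$; but the integrands in \eqref{eq:4} are anyway only defined on $T_x$ for such $t$, so discarding a further null set of $t$ causes no harm. Everything else — the completeness of $C^0_b(X,F)$, the contraction estimate, the invocation of Banach's theorem, and the identification with the fixed point of \cref{thm:fixed-point-bounded} — is routine and follows the pattern already established in the proof of \cref{thm:fixed-point-bounded} and \cref{cor:fixed-point-bounded-speed-conv}.
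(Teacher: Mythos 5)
Your proposal is correct and follows essentially the same route as the paper: split $T(f)$ into its affine and linear parts, apply the dominated convergence theorem for Bochner integrals with the constant dominating functions coming from the boundedness of $q$ and $s$, use the discontinuity-time hypotheses together with the lemma identifying discontinuity points of $\1_{X_t}$ with $\partial X_t$ to get the almost-everywhere-in-$t$ pointwise convergence, and then invoke the Banach fixed point theorem on the closed subspace $C^0_b(X,F)$ with uniqueness giving the identification with the fixed point of \cref{thm:fixed-point-bounded}. Your explicit bookkeeping of the exceptional null sets of $t$ is, if anything, slightly more careful than the paper's exposition.
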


\begin{ex}
  Returning to \cref{exa:1}, we first observe that since $X_t = \bs{\frac{1}{2} t - \frac{1}{2}, \frac{1}{2} t}$
  for $t \in [1,2]$, the set
  $\bc{t \in [1,2] : x \in \partial X_t}$ contains at most two points and is thus a null set for all $x \in [0,1]$.
  Let $x_k \to \frac{1}{2}$ be a sequence converging to $\frac{1}{2}$ from below (so $x_k < \frac{1}{2}$ for all $k \in \N$). Then
  \[ \inv{l_t}(x_k) = 2x_k - t + 1 < 2 - t, \]
  implying $q_t \bb{\inv{l_t}(x_k)} = 0$, even though $q_t \bb{\inv{l_t}\bb{\frac{1}{2}}} = 1$,
  which shows that $q_t \circ \inv{l_t}$ is discontinuous at $\frac{1}{2}$ for all $t \in [1,2]$.
  Therefore, the assumptions of \cref{thm:fixed-point-cont} are not fulfilled.
  Instead, the affine part of the iRB operator satisfies
  \[ \lim_{k \to \infty} \int_1^2 (q_t \circ \inv{l_t})(x_k) \1_{X_t}(x_k) d t = \int_1^2 \bb{\lim_{k \to \infty} 0} d t = 0, \]
  and thus does not converge to
  \[ \int_1^2 q_t \bb{\inv{l_t}\bb{\frac{1}{2}}} d t = \int_1^2 1 d t = 1. \]
  If $f$ is continuous, then the linear part of the iRB operator is continuous, as $s_t$ and $\inv{l_t}$ are continuous for all $t \in [1,2]$.
  Consequently, $T(f)$ is discontinuous and in particular the unique fixed point must be discontinuous as well.
  This is confirmed by \cref{fig:exa-disc}.
\end{ex}

\begin{ex}
  In \cref{exa:2}, it is clear that $\inv{l_t}$, $q$ and $s$ are continuous, which together with the observation regarding $\bc{t \in [1,2] : x \in \partial X_t}$
  from the previous example implies that the solution must be continuous.
  This is also apparent from \cref{fig:exa-cont}.
\end{ex}
\section{Solutions in $L^p$}\label{sec:solutions-in-lp}
Let us now consider $X \subset E \defi \R^d$ and $F \defi \R^m$ with the usual Euclidean norm and the $L^p$ space $Z = L^p(X,\R^m)$ (for $p \in [1,\infty)$), which is equipped with the $p$-norm
\[ \norm{f} \defi \bb{\int_X \norm{f(x)}^p dx}^{\frac{1}{p}}. \]
As before, we first need to ensure that $T$ maps from $Z$ to $Z$,
meaning that $T(f) \in L^p(X,\R^m)$ whenever $f \in L^p(X,\R^m)$.
To this end, we calculate (for arbitrary $x \in X$)
\begin{align*}
\left\|\int_1^n (q_t \circ \inv{l_t})(x)\right. & \left. \1_{X_t}(x) d t + \int_1^n (s_t \circ \inv{l_t})(x) \cdot (f \circ \inv{l_t})(x) \1_{X_t}(x) d t\right\|^p\\
  &\le 2^{p-1} \left[\, \n{\int_1^n (q_t \circ \inv{l_t})(x) \1_{X_t}(x) d t}^p\right.\\
  &\quad + \left.\n{\int_1^n (s_t \circ \inv{l_t})(x) \cdot (f \circ \inv{l_t})(x) \1_{X_t}(x) d t}^p\,\right]\\
  &\le 2^{p-1} \left[\left(\int_1^n \n{(q_t \circ \inv{l_t})(x)} \1_{X_t}(x) d t\right)^{\hspace{-2pt} p}\right.\\
  &\quad + \left.\left(\int_1^n \abs{(s_t \circ \inv{l_t})(x)} \cdot \n{(f \circ \inv{l_t})(x)} \1_{X_t}(x) d t\right)^{\hspace{-2pt} p}\right],
\end{align*}
where we first utilized Jensen's inequality and then the monotonicity of $x \mapsto x^p$.
Another application of Jensen's inequality reveals that
\[ \bb{\int_1^n \norm{(q_t \circ \inv{l_t})(x)} \1_{X_t}(x) d t}^{\hspace{-2pt} p} \le (n-1)^{p-1} \int_1^n \norm{(q_t \circ \inv{l_t})(x)}^p \1_{X_t}(x) d t. \]
The second summand can be estimated analogously. Putting these inequalities together, we obtain
\begin{align*}
  \norm{T(f)}^p
  &= \int_X \left\| \int_1^n (q_t \circ \inv{l_t})(x) \1_{X_t}(x) d t \right.\\
  &\quad + \left. \int_1^n (s_t \circ \inv{l_t})(x) \cdot (f \circ \inv{l_t})(x) \1_{X_t}(x) d t \right\|^p dx\\
  &\le (2(n-1))^{p-1} \int_X \bigg( \int_1^n \norm{(q_t \circ \inv{l_t})(x)}^p \1_{X_t}(x) dt\\
  &\quad + \int_1^n \abs{(s_t \circ \inv{l_t})(x)}^p \cdot \norm{(f \circ \inv{l_t})(x)}^p \1_{X_t}(x) dt \bigg) dx.
\end{align*}
By Tonelli's theorem, this is equal to
\begin{align*}
  (2(n-1))^{p-1} \int_1^n & \left( \int_{X_t} \norm{(q_t \circ \inv{l_t})(x)}^p dx \right.\\
  &\left. + \int_{X_t} \abs{(s_t \circ \inv{l_t})(x)}^p \cdot \norm{(f \circ \inv{l_t})(x)}^p dx \right) dt.
\end{align*}
Assuming that $l_t \colon X \to X_t$ is a diffeomorphism for almost all $t \in [1,n]$,
we may apply the Jacobi transformation formula to simplify this expression to
\begin{align*}
  (2(n-1))^{p-1} \int_1^n & \left( \int_X \norm{q_t(z)}^p \abs{\det(D l_t(z))} dz \right.\\
  &\left. + \int_X \abs{s_t(z)}^p \cdot \norm{f(z)}^p \abs{\det(D l_t(z))} dz \right) dt.
\end{align*}
Moreover, assuming that $X$ is bounded and that $\sup\limits_{t \in [1,n], z \in X} \abs{\det(D l_t(z))}$ is finite,
it suffices to require that 
\[
q \in L^p([1,n] \times X, \R^m)\quad\text{and}\quad s \in L^\infty([1,n] \times X, \R^m),
\]
in order to guarantee that the above expression is finite.

This leads us to the following theorem, which is similar to \cref{thm:fixed-point-bounded}.

\begin{thm}\label{thm:fixed-point-lp}
  Let $X \subset \R^d$ be a bounded subset, $q \in L^p([1,n] \times X, \R^m)$ and $l_t \colon X \to X_t$ a diffeomorphism for almost all $t \in [1,n]$. Furthermore assume that the suprema
  \[ S \defi \sup_{t \in [1,n], x \in X} \abs{s(t,x)}, \qquad L \defi \sup_{t \in [1,n], x \in X} \abs{\det(D l_t(x))} \]
  are finite and satisfy
  \[ (n-1) \cdot S \cdot L^{\frac{1}{p}} < 1. \]
  Then, there exists a unique fixed point of the iRB operator
  $T \colon L^p(X,\R^m) \to L^p(X,\R^m)$.
\end{thm}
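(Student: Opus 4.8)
The plan is to invoke the Banach fixed point theorem one more time, exactly as in the proof of \cref{thm:fixed-point-bounded}, but now on the complete metric space $L^p(X,\R^m)$ equipped with the $p$-norm. Two things must be checked: that $T$ maps $L^p(X,\R^m)$ into itself, and that $T$ is a contraction with Lipschitz constant $(n-1)\cdot S\cdot L^{\frac{1}{p}}$, which is strictly less than $1$ by hypothesis.

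The first point is essentially the content of the computation carried out immediately before the statement of the theorem. Starting from $\norm{T(f)}^p$, two successive applications of Jensen's inequality (to split the affine and the linear part of $T$, and to absorb the inner integral over $[1,n]$), followed by Tonelli's theorem and the Jacobi transformation formula for the diffeomorphisms $l_t$, reduce the finiteness of $\norm{T(f)}^p$ to that of
\[ (2(n-1))^{p-1}\int_1^n\bb{\int_X\norm{q_t(z)}^p\abs{\det(Dl_t(z))}\,dz+\int_X\abs{s_t(z)}^p\norm{f(z)}^p\abs{\det(Dl_t(z))}\,dz}dt, \]
which is finite because $q\in L^p([1,n]\times X,\R^m)$, $s$ is bounded by $S<\infty$, $f\in L^p(X,\R^m)$, $X$ is bounded, and $L=\sup_{t,z}\abs{\det(Dl_t(z))}<\infty$. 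Hence for this part I would simply refer back to that discussion.

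For the contraction estimate I would rerun the same chain of inequalities on the difference. Given $f,g\in L^p(X,\R^m)$, the affine parts of $T(f)$ and $T(g)$ cancel, so that for every $x\in X$
\[ T(f)(x)-T(g)(x)=\int_1^n(s_t\circ\inv{l_t})(x)\cdot((f-g)\circ\inv{l_t})(x)\,\1_{X_t}(x)\,dt. \]
Taking norms, moving the norm inside the integral, and applying Jensen's inequality on $[1,n]$ (an interval of length $n-1$) yields
\[ \norm{T(f)(x)-T(g)(x)}^p\le(n-1)^{p-1}\int_1^n\abs{(s_t\circ\inv{l_t})(x)}^p\norm{((f-g)\circ\inv{l_t})(x)}^p\1_{X_t}(x)\,dt. \]
Integrating over $x\in X$, interchanging the order of integration by Tonelli's theorem, and then performing on each slice $X_t$ the change of variables $z=\inv{l_t}(x)$ (legitimate because $l_t\colon X\to X_t$ is a diffeomorphism for almost all $t$), we obtain
\[ \norm{T(f)-T(g)}^p\le(n-1)^{p-1}\int_1^n\int_X\abs{s_t(z)}^p\norm{(f-g)(z)}^p\abs{\det(Dl_t(z))}\,dz\,dt. \]
Bounding $\abs{s_t(z)}^p\le S^p$ and $\abs{\det(Dl_t(z))}\le L$ and carrying out the now trivial integration in $t$ (another factor $n-1$) gives $\norm{T(f)-T(g)}^p\le(n-1)^p\,S^p\,L\,\norm{f-g}^p$, i.e. $\norm{T(f)-T(g)}\le(n-1)\,S\,L^{\frac{1}{p}}\,\norm{f-g}$. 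Since $(n-1)SL^{\frac{1}{p}}<1$ by hypothesis, $T$ is a contraction on $L^p(X,\R^m)$, and the Banach fixed point theorem yields the unique fixed point.

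I expect that there is no serious obstacle here — the argument is a direct repetition of the bounded case combined with the $L^p$ estimates already established — and that the only points requiring some care are measurability questions. To apply Tonelli's theorem one needs the integrand $(t,x)\mapsto(s_t\circ\inv{l_t})(x)((f-g)\circ\inv{l_t})(x)\1_{X_t}(x)$ to be jointly measurable on $[1,n]\times X$; this follows from the standing measurability hypotheses on $l$ and $s$ together with the diffeomorphism assumption, which in particular makes each $\inv{l_t}$ continuous on $X_t$. One also needs that for almost every $t$ the map $l_t$ is genuinely a diffeomorphism onto $X_t$, so that $\inv{l_t}$ and the Jacobian determinant $\det(Dl_t)$ are defined on that slice and the change-of-variables formula applies — but this is precisely part of the hypotheses.
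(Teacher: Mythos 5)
Your proposal is correct and follows essentially the same route as the paper's proof: refer back to the pre-theorem computation for well-definedness, then apply Jensen, Tonelli, and the change of variables to the difference $T(f)-T(g)$ to obtain the Lipschitz constant $(n-1)SL^{1/p}$, and conclude via the Banach fixed point theorem. Your additional remarks on joint measurability are a welcome bit of extra care that the paper leaves implicit.
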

\begin{proof}
  We need to verify that $T$ is contraction. By definition, we have
  \begin{align*}
    \norm{T(f) - T(g)}^p
    &= \int_X \norm{\int_1^n (s_t \circ \inv{l_t})(x) \cdot ((f-g) \circ \inv{l_t})(x) \1_{X_t}(x) dt}^p dx
  \end{align*}
  and arguments similar to those above yield the upper bound
  \begin{align*}
    &\quad\ (n-1)^{p-1} \int_1^n \int_X \abs{s_t(z)}^p \cdot \norm{(f-g)(z)}^p \abs{\det(D l_t(z))} dz dt\\
    &\le (n-1)^p S^p L \norm{f-g}^p,
  \end{align*}
  so $T$ is a contraction whenever $(n-1) \cdot S \cdot L^{\frac{1}{p}} < 1$.
\end{proof}
\noindent
We demonstrate the theorem by an example.
\begin{ex}
  Choose $l \colon [1,2] \times [0,1) \to [0,1)$ as in \cref{exa:3}, that is,
  \[ l(t,x) = \tfrac{1}{2} x + \1_{[\frac{3}{2},2]}(t) \tfrac{1}{2}. \]
  Furthermore, let (with $q(t,0) \in \R$ arbitrary)
  \[ q(t,x) = \frac{1}{\sqrt{x}} \in L^1([1,2] \times [0,1), \R), \qquad s(t,x) = \tfrac{3}{2}. \]
  Then $S = \frac{3}{2}$ and $L = \frac{1}{2}$, so that $(2-1) \cdot \frac{3}{2} \cdot \frac{1}{2} = \frac{3}{4} < 1$ and
  thus \cref{thm:fixed-point-lp} implies the existence of a unique fixed point of the iRB operator
  $T \colon L^1([0,1),\R) \to L^1([0,1),\R)$.
    \begin{figure}[H]
    \centering
    \includegraphics[width=3\textwidth/4]{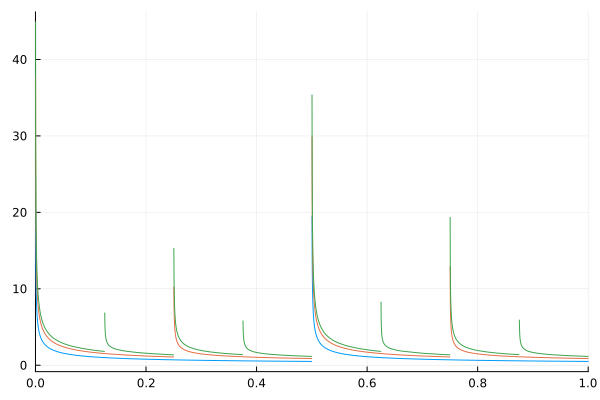}
    \vspace{-5pt}
    \caption{The first three (blue, orange, green) iterates of the iRB operator
      when starting with the zero function.
      The first iterate has two ``spikes'' at  $x=0$ and $x = \frac{1}{2}$ and their number doubles with each iteration.}
  \end{figure}
\end{ex}


\end{document}